\def \tr {\mathop{\rm tr}\nolimits}
\def \Vol {\mathop{\rm Vol}\nolimits}
\def \etr {\mathop{\rm etr}\nolimits}
\renewenvironment{abstract}
                 {\vspace{6pt}
                  \begin{center}
                  \begin{minipage}{5in}
                  \centerline{\textbf{Abstract}}
                  \noindent\ignorespaces
                 }
                 {\end{minipage}\end{center}}
\newtheorem{thm}{\textbf{Theorem}}[section]
\newtheorem{lem}{\textbf{Lemma}}[section]
\theoremstyle{definition}
\title{\Large \textbf{An identity of Jack polynomials}}
\author{
  \textbf{Jos\'e A. D\'{\i}az-Garc\'{\i}a} \thanks{Corresponding author\newline
   {\bf Key words.}  Jack polynomials; generalised hypergeometric functions;
real, complex, quaternion and octonion random matrices.\newline
    2000 Mathematical Subject Classification. Primary 43A90, 33C20; secondary
    15A52}\\
  {\normalsize Department of Statistics and Computation} \\
  {\normalsize 25350 Buenavista, Saltillo, Coahuila, Mexico} \\
  {\normalsize E-mail: jadiaz@uaaan.mx} \\[2ex]
  \textbf{Ram\'on Guti\'errez J\'aimez} \\
  {\normalsize Department of Statistics and O.R.} \\
  {\normalsize University of Granada} \\
  {\normalsize Granada 18071, Spain}\\
  {\normalsize E-mail: rgjaimez@ugr.es}\\
}
\date{}
\begin{document}
\maketitle

\begin{abstract}
In this work it is propose an alterative proof of one of basic properties of the
zonal polynomials. This identity is generalised for the Jack polynomials.
\end{abstract}

\section{Introduction}\label{sec1}

Many results in multivariate distribution theory have been obtained using zonal and
invariant polynomials. Moreover, these results, in their final version, have been
derived in very compact form, using hypergeometric functions with one or two matrix
arguments, see \citet{c:63}, \citet{j:64}, \citet{chd:79}, \citet{da:80} and
\citet{m:82}, among many others.

Many of these results obtained in the real case have also been studied with respect
to complex, quaternion and octonion cases, see \citet{j:64}, \citet{lx:09} and
\citet{f:09}, and although many properties of real and complex zonal polynomials have
been extended to the quaternion and octonion cases, many others remain unstudied.

In this paper, we are interested in particular in the basic property of real zonal
polynomials, examined in \citet[Theorem 5, eq. (27)]{j:61} (see also \citet[eq.
(22)]{j:64}), and proved by \citet{j:61}, in terms of group representation theory.
This property plays a fundamental role in the context of matrix multivariate
elliptical distributions and specifically in that of related noncentral matrix
multivariate distributions, such as generalised noncentral Wishart and beta
distributions, and also in the context of generalised shape theory, see
\citet{dggf:04b}, \citet{dggj06} and \citet{Caro2009}.

Section \ref{sec2} proposes an alternative proof of one of the basic properties of
zonal polynomials established by \citet[Theorem 5, eq. (27)]{j:61} (see also
\citet[eq. (22)]{j:64}). The proof is given in terms of the results in \citet{h:55}
and \citet{c:63}, and as the main result, this property is generalised for real
normed division algebras.

\section{Main result}\label{sec2}

A detailed discussion of real normed division algebras may be found in \citet{b:02}
and \citet{gr:87}, and of Jack polynomials and hypergeometric functions in
\citet{S:97}, \citet{gr:87} and \citet{KE:06}. For convenience, we shall introduce
some notations, although in general we adhere to standard notations.

There are exactly four real finite-dimensional normed division algebras: real
numbers, complex numbers, quaternions and octonions, these being denoted generically
as $\mathfrak{F}$, see \citet{b:02}. All division algebras have a real dimension of
$1, 2, 4$ or $8$, respectively, whose dimension is denoted by $\beta$, see
\citet[Theorems 1, 2 and 3]{b:02}.

Let ${\mathcal L}^{\beta}_{m,n}$ be the linear space of all $n \times m$ matrices of
rank $m \leq n$ over $\mathfrak{F}$ with $m$ distinct positive singular values, where
$\mathfrak{F}$ denotes a \emph{real finite-dimensional normed division algebra}. Let
$\mathfrak{F}^{n \times m}$ be the set of all $n \times m$ matrices over
$\mathfrak{F}$, and let $\mathbf{A} \in \mathfrak{F}^{n \times m}$. Then
$\mathbf{A}^{*} = \overline{\mathbf{A}}^{T}$ denotes the usual conjugate transpose.

The set of matrices $\mathbf{H}_{1} \in \mathfrak{F}^{n \times m}$ such that
$\mathbf{H}_{1}^{*}\mathbf{H}_{1} = \mathbf{I}_{m}$ is a manifold denoted ${\mathcal
V}_{m,n}^{\beta}$, termed the \emph{Stiefel manifold}. In particular, ${\mathcal
V}_{m,m}^{\beta}$, is the maximal compact subgroup $\mathfrak{U}^{\beta}(m)$ of
${\mathcal L}^{\beta}_{m,m}$ and consists of all matrices $\mathbf{H} \in
\mathfrak{F}^{m \times m}$ such that $\mathbf{H}^{*}\mathbf{H} = \mathbf{I}_{m}$. If
$\mathbf{H}_{1} \in \mathcal{V}_{m,n}^{\beta}$ then
$$
  (\mathbf{H}^{*}_{1}d\mathbf{H}_{1}) = \bigwedge_{i=1}^{n} \bigwedge_{j =i+1}^{m}
  \mathbf{h}_{j}^{*}d\mathbf{h}_{i}.
$$
where $\mathbf{H} = (\mathbf{H}_{1}|\mathbf{H}_{2}) = (\mathbf{h}_{1}, \dots,
\mathbf{h}_{m}|\mathbf{h}_{m+1}, \dots, \mathbf{h}_{n}) \in \mathfrak{U}^{\beta}(m)$.
 The surface area or volume of the Stiefel manifold
$\mathcal{V}^{\beta}_{m,n}$ is
\begin{equation}\label{vol}
    \Vol(\mathcal{V}^{\beta}_{m,n}) = \int_{\mathbf{H}_{1} \in
  \mathcal{V}^{\beta}_{m,n}} (\mathbf{H}^{*}_{1}d\mathbf{H}_{1}) =
  \frac{2^{m}\pi^{mn\beta/2}}{\Gamma^{\beta}_{m}[n\beta/2]},
\end{equation}
where $\Gamma^{\beta}_{m}[a]$ denotes the multivariate Gamma function for the space
of hermitian matrices, see \citet{gr:87}.

Let $C_{\kappa}^{\beta}(\mathbf{B})$ be the Jack polynomials of $\mathbf{B} =
\mathbf{B}^{*}$, corresponding to the partition $\kappa=(k_{1},\ldots k_{m})$ of $k$,
$k_{1} \geq \cdots \geq k_{m} \geq 0$ with $\sum_{i=1}^{m}k_{i}=k$, see \citet{S:97}
and \citet{KE:06}. In addition,
$$
  {}_{p}F_{q}^{\beta}(a_{1}, \dots,a_{p};b_{1}, \dots,b_{q}; \mathbf{B}) = \sum_{k=0}^{\infty}\sum_{\kappa}
  \frac{[a_{1}]_{\kappa}^{\beta}, \dots, [a_{p}]_{\kappa}^{\beta}}{[b_{1}]_{\kappa}^{\beta}, \dots, [b_{p}]_{\kappa}}^{\beta}
  \frac{C_{\kappa}^{\beta}(\mathbf{B})}{k!},
$$
defines the hypergeometric function with one matrix argument on the space of
hermitian matrices, where $[a]^{\beta}_{\kappa}$ denotes the generalised Pochhammer
symbol of weight $\kappa$, defined as
$$
  [a]^{\beta}_{\kappa} = \prod_{i=1}^{m}(a-(i-1)\beta/2)_{k_{1}}
$$
where $ \Re(a) > (m-1)\beta/2 - k_{m}$ and $(a)_{i} = a(a+1)\cdots(a+i-1)$, see
\citet{gr:87}, \citet{KE:06} and \citet{dg:09}.

Now, we clarify an apparent discrepancy between the results obtained by the different
approaches. From \citet[Lemma 9.5.3, p. 397]{m:82}, it is easy to see that equality
(3.5'), proved via a Laplace transform by \citet[p. 494]{h:55}, and equality (27) in
\citet{j:64}, proved via group representation theory by \citet[Theorem 5]{j:61},
coincide.

Then, from \citet[eq. (27)]{j:61} (see also \citet[eq. (22)]{j:64}) we have the
following.

\begin{lem}\label{lemj}
If $\mathbf{X} \in \mathfrak{L}^{1}_{n,m}$, then
\begin{equation}\label{main}
    \int_{\mathbf{H}_{1} \in \mathcal{V}^{1}_{m,n}} (\tr(\mathbf{XH}_{1}))^{2k} (d\mathbf{H}_{1})=
  \sum_{\kappa}\frac{\left(
  \frac{1}{2}\right)_{k}}{[n/2]^{1}_{\kappa}}C^{1}_{\kappa}(\mathbf{XX}^{*}).
\end{equation}
\end{lem}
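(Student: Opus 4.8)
The plan is to reduce the left-hand integral to something that can be evaluated using the known expansion of $\etr$-type integrals over the Stiefel manifold in terms of Jack (here, zonal) polynomials. First I would note that $\tr(\mathbf{XH}_1)$ is linear in $\mathbf{H}_1$, so $(\tr(\mathbf{XH}_1))^{2k}$ is a homogeneous polynomial of degree $2k$ in the entries of $\mathbf{H}_1$; the odd powers will integrate to zero by the symmetry $\mathbf{H}_1 \mapsto -\mathbf{H}_1$ of the Haar-type measure, which is why only the even power $2k$ appears and why the answer is a polynomial in $\mathbf{X}\mathbf{X}^{*}$ of degree $k$. The cleanest route is to introduce a generating function: consider $\int_{\mathcal{V}^1_{m,n}} \exp\bigl(t\,\tr(\mathbf{X}\mathbf{H}_1)\bigr)(d\mathbf{H}_1)$, expand the exponential, discard odd terms, and match the coefficient of $t^{2k}/(2k)!$ on both sides. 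On the other side, one uses the classical formula of \citet{h:55} (equivalently \citet[Lemma 9.5.3]{m:82}), namely the expansion of $\int_{\mathcal{V}^1_{m,n}} \etr(\mathbf{Z}^{*}\mathbf{H}_1)(d\mathbf{H}_1)$ as a ${}_0F_1$-type series $\sum_\kappa \frac{C^1_\kappa(\mathbf{Z}\mathbf{Z}^{*}/4)}{[n/2]^1_\kappa\,k!}\cdot(\text{vol factor})$, applied with $\mathbf{Z}^{*} = t\mathbf{X}$.

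The key steps, in order, are: (1) write down the generating-function identity and justify the interchange of sum and integral (absolute convergence, since the integrand is continuous on a compact manifold and the exponential series converges uniformly); (2) invoke the Herz/Constantine-type evaluation of the integral of $\etr(\mathbf{Z}^{*}\mathbf{H}_1)$ over $\mathcal{V}^1_{m,n}$ from \citet{h:55} and \citet{c:63}, which expresses it through zonal polynomials $C^1_\kappa(\mathbf{Z}\mathbf{Z}^{*})$ with the Pochhammer-type denominator $[n/2]^1_\kappa$; (3) extract the coefficient of $t^{2k}$ on both sides. On the left this gives $\frac{1}{(2k)!}\int (\tr(\mathbf{X}\mathbf{H}_1))^{2k}(d\mathbf{H}_1)$; on the right, since $C^1_\kappa$ is homogeneous of degree $|\kappa|=k$, the term with partition $\kappa$ of $k$ carries $t^{2k}$ and contributes $\frac{C^1_\kappa(\mathbf{X}\mathbf{X}^{*})}{[n/2]^1_\kappa\, k!}\cdot 4^{-k}$. (4) Finally, equate and simplify the numerical constant: $\frac{(2k)!}{4^k\,k!} = \frac{(2k)!}{2^{2k}\,k!} = \bigl(\tfrac12\bigr)_k$ by the standard duplication identity $(2k)! = 2^{2k}\,k!\,\bigl(\tfrac12\bigr)_k$. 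This yields exactly \eqref{main}.

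An alternative, more self-contained route avoids generating functions: expand $(\tr(\mathbf{X}\mathbf{H}_1))^{2k}$ directly via the fact that any symmetric polynomial in the eigenvalues of $\mathbf{H}_1^{*}\mathbf{X}^{*}\mathbf{X}\mathbf{H}_1$ has a finite zonal-polynomial expansion, use $\int_{\mathcal{V}^1_{m,n}} C^1_\kappa(\mathbf{A}\mathbf{H}_1\mathbf{B}\mathbf{H}_1^{*})(d\mathbf{H}_1)$ reproducing-type formulas, and then fix the constants by testing on a rank-one $\mathbf{X}$, where everything reduces to the one-dimensional moment $\int_{-1}^{1}\text{-like}$ computation giving $\bigl(\tfrac12\bigr)_k$. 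I would present the generating-function argument as the main proof since it makes the constant $\bigl(\tfrac12\bigr)_k$ transparent through the duplication formula.

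I expect the main obstacle to be bookkeeping of the normalisation constants: reconciling the volume factor $\Vol(\mathcal{V}^1_{m,n})$ appearing in Herz's $\etr$-formula with the fact that \eqref{main} is written for the unnormalised measure $(d\mathbf{H}_1)$ (note the right-hand side of \eqref{main} has no explicit volume factor, so it must be absorbed — in fact Herz's formula as usually stated already carries it, and the ratio works out). Care is also needed with the precise form of the denominator parameter: it must come out as $[n/2]^1_\kappa$, matching the $\beta=1$ Pochhammer symbol with argument $n\beta/2 = n/2$, which is exactly the parameter governing the Stiefel-manifold volume in \eqref{vol}. Once these constants are tracked correctly, the identity falls out; the generalisation to arbitrary $\beta$ (the announced main result) then follows by running the same argument with $C^\beta_\kappa$, $[n\beta/2]^\beta_\kappa$ and the $\beta$-version of the Herz integral, with $\bigl(\tfrac12\bigr)_k$ unchanged since it originates purely from the scalar duplication formula.
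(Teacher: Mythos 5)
Your proposal is correct and takes essentially the same route as the paper: both expand Herz's integral formula for ${}_{0}F_{1}^{1}(n/2,\mathbf{XX}^{*}/4)$ over the Stiefel manifold, drop the odd-order terms by symmetry, match the series degree by degree against Constantine's zonal-polynomial expansion, and finish with the homogeneity of $C^{1}_{\kappa}$ together with the duplication identity $4^{k}\left(\tfrac12\right)_{k}/(2k)! = 1/k!$. Your auxiliary parameter $t$ is merely a bookkeeping device for the term-by-term comparison that the paper carries out directly.
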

\begin{proof} From \citet[eq. (3.5'), p. 494]{h:55}, and expanding in series of powers
\begin{eqnarray*}
  {}_{0}F_{1}^{1}(n/2, \mathbf{XX}^{*}/4) &=& \displaystyle \int_{\mathbf{H}_{1} \in
        \mathcal{V}_{m,n}^{1}}  \etr \{\mathbf{XH}_{1}\}(d\mathbf{H}_{1}) \\
    &=& \displaystyle \sum_{k=0}^{\infty}\frac{1}{k!} \int_{\mathbf{H} \in
    \mathcal{V}^{1}_{m,n}} (\tr(\mathbf{XH}_{1}))^{k} (d\mathbf{H}_{1}).
\end{eqnarray*}
Recalling that if one or more parts $k_{1}, \dots, k_{m}$ of partition $k$ is odd,
then
$$
  \int_{\mathbf{H} \in
    \mathcal{V}^{1}_{m,n}} (\tr(\mathbf{XH}_{1}))^{k} (d\mathbf{H}_{1}) = 0,
$$
see \citet{j:61a} and \citet{j:64}. Therefore
\begin{equation}\label{le}
    {}_{0}F_{1}^{1}(n/2, \mathbf{XX}^{*}/4) = \sum_{k=0}^{\infty}\frac{1}{(2k)!} \int_{\mathbf{H} \in
    \mathcal{V}^{1}_{m,n}} (\tr(\mathbf{XH}_{1}))^{2k} (d\mathbf{H}_{1}).
\end{equation}
Now, by the definition of hypergeometric functions with one matrix argument in terms
of zonal polynomials, we have (see \citet{c:63}),
\begin{equation}\label{lh}
    {}_{0}F_{1}^{1}(n/2, \mathbf{XX}^{*}/4) = \sum_{k=0}^{\infty} \sum_{\kappa}
    \frac{1}{[n/2]_{\kappa}^{1}} \frac{C_{\kappa}^{1}(\mathbf{XX}^{*}/4)}{k!}.
\end{equation}
Then, equalling and comparing term-by-term the series on the right side of (\ref{le})
and (\ref{lh}) we obtain
$$
  \sum_{\kappa}
    \frac{1}{[n/2]_{\kappa}^{1}} \frac{C_{\kappa}^{1}(\mathbf{XX}^{*}/4)}{k!} = \frac{1}{(2k)!}
    \int_{\mathbf{H} \in \mathcal{V}^{1}_{m,n}} (\tr(\mathbf{XH}_{1}))^{2k}
    (d\mathbf{H}_{1}).
$$
Finally, observing that $4^{k}(1/2)_{k}/(2k)! = 1/k!$ and that
$C_{\kappa}^{1}(a\mathbf{B}) = a^{k}C_{\kappa}^{1}(\mathbf{B})$, the desired result
is obtained. \qed
\end{proof}

Property (\ref{main}) was also proved in an alternative way by \citet[Lemma 1, p.
40]{t:84}, for the real case. Now, under our approach, property (\ref{main}) is
easily extended to the Jack polynomial case for real normed division algebras.

\begin{thm}\label{lemjack}
Let $\mathbf{X} \in \mathfrak{L}^{\beta}_{n,m}$, then
\begin{equation}\label{mainj}
    \int_{\mathbf{H}_{1} \in \mathcal{V}^{\beta}_{m,n}} (\tr(\mathbf{XH}_{1}))^{2k} (d\mathbf{H}_{1})=
  \sum_{\kappa}\frac{\left(
  \frac{1}{2}\right)_{k}}{[\beta n/2]^{\beta}_{\kappa}}C^{\beta}_{\kappa}(\mathbf{XX}^{*}).
\end{equation}
\end{thm}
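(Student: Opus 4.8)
The plan is to mimic exactly the argument used for Lemma~\ref{lemj}, replacing the real-case formulas by their $\beta$-analogues. The starting point is the Stiefel-manifold integral representation of the Bessel-type hypergeometric function over a general real normed division algebra: one has
\begin{equation*}
  {}_{0}F_{1}^{\beta}(\beta n/2; \mathbf{XX}^{*}/4) = \int_{\mathbf{H}_{1} \in \mathcal{V}^{\beta}_{m,n}} \etr\{\mathbf{XH}_{1}\} (d\mathbf{H}_{1}),
\end{equation*}
which is the $\beta$-version of \citet[eq. (3.5'), p. 494]{h:55}; this is available in the references on Jack polynomials and division-algebra matrix integrals (\citet{gr:87}, \citet{KE:06}, \citet{dg:09}). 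First I would expand $\etr$ as a power series in $k$ to write the right-hand side as $\sum_{k\ge 0} \frac{1}{k!}\int (\tr(\mathbf{XH}_1))^{k}(d\mathbf{H}_1)$.

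Next I would invoke the vanishing of the odd-degree integrals: whenever a partition of $k$ has an odd part, $\int_{\mathcal{V}^{\beta}_{m,n}}(\tr(\mathbf{XH}_1))^{k}(d\mathbf{H}_1)=0$, the $\beta$-generalisation of the fact cited from \citet{j:61a} and \citet{j:64}. This collapses the sum to even powers, giving
\begin{equation*}
  {}_{0}F_{1}^{\beta}(\beta n/2; \mathbf{XX}^{*}/4) = \sum_{k=0}^{\infty}\frac{1}{(2k)!}\int_{\mathbf{H}_{1} \in \mathcal{V}^{\beta}_{m,n}} (\tr(\mathbf{XH}_1))^{2k}(d\mathbf{H}_1).
\end{equation*}
On the other hand, the definition of ${}_{0}F_{1}^{\beta}$ in terms of Jack polynomials (given in the excerpt) yields
\begin{equation*}
  {}_{0}F_{1}^{\beta}(\beta n/2; \mathbf{XX}^{*}/4) = \sum_{k=0}^{\infty}\sum_{\kappa}\frac{1}{[\beta n/2]^{\beta}_{\kappa}}\frac{C^{\beta}_{\kappa}(\mathbf{XX}^{*}/4)}{k!}.
\end{equation*}
Since both expansions are power series in the entries of $\mathbf{XX}^{*}$ and $\int(\tr(\mathbf{XH}_1))^{2k}(d\mathbf{H}_1)$ is a homogeneous polynomial of degree $2k$ in $\mathbf{X}$ (hence of degree $k$ in $\mathbf{XX}^{*}$), I would match terms of equal degree to get, for each $k$,
\begin{equation*}
  \frac{1}{(2k)!}\int_{\mathbf{H}_{1} \in \mathcal{V}^{\beta}_{m,n}} (\tr(\mathbf{XH}_1))^{2k}(d\mathbf{H}_1) = \sum_{\kappa}\frac{1}{[\beta n/2]^{\beta}_{\kappa}}\frac{C^{\beta}_{\kappa}(\mathbf{XX}^{*}/4)}{k!}.
\end{equation*}
Finally I would clear the numerical factors exactly as in the real case, using $C^{\beta}_{\kappa}(a\mathbf{B}) = a^{k}C^{\beta}_{\kappa}(\mathbf{B})$ so that $C^{\beta}_{\kappa}(\mathbf{XX}^{*}/4) = 4^{-k}C^{\beta}_{\kappa}(\mathbf{XX}^{*})$, together with the identity $4^{k}(1/2)_k/(2k)! = 1/k!$, to arrive at \eqref{mainj}.

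The main obstacle is not the algebra, which is routine once the ingredients are in place, but justifying the two $\beta$-analogue inputs: (i) the integral representation ${}_{0}F_{1}^{\beta}(\beta n/2; \mathbf{XX}^{*}/4) = \int_{\mathcal{V}^{\beta}_{m,n}}\etr\{\mathbf{XH}_1\}(d\mathbf{H}_1)$ over the Stiefel manifold, and (ii) the vanishing of odd-power integrals. For (i), the cleanest route is to invoke the known Laplace/Hankel-transform identities for hypergeometric functions with one matrix argument on the symmetric cones associated with $\mathfrak{F}$ (\citet{gr:87}, \citet{KE:06}), or alternatively to derive it by integrating the joint density of $\mathbf{H}_1$ against $\etr$ and recognising the series; the volume constant in \eqref{vol} ensures the normalisation is consistent. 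For (ii), the argument is the symmetry $\mathbf{H}_1 \mapsto \mathbf{H}_1 \mathbf{D}$ where $\mathbf{D}$ ranges over diagonal sign (or unit-norm) matrices in $\mathfrak{U}^{\beta}(m)$, under which the Haar measure $(d\mathbf{H}_1)$ is invariant while $\tr(\mathbf{XH}_1)$ picks up a sign on the relevant monomials, forcing the odd-degree pieces to integrate to zero — the same mechanism as in \citet{j:61a}, and it carries over verbatim to general $\beta$ because the invariance of the normalised measure on $\mathcal{V}^{\beta}_{m,n}$ under right multiplication by $\mathfrak{U}^{\beta}(m)$ holds for all four algebras.
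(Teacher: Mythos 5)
Your proposal is correct and follows essentially the same route as the paper: it cites the $\beta$-analogue of the Herz integral representation of ${}_{0}F_{1}^{\beta}$ over the Stiefel manifold (the paper attributes this to \citet{dg:09}, with the complex and quaternion cases from \citet{j:64} and \citet{lx:09}) and then repeats the term-by-term comparison from the real-case lemma. The only difference is that you spell out the sign-symmetry argument for the vanishing of odd-power integrals and the normalisation issues, which the paper leaves implicit by simply referring back to the proof of Lemma \ref{lemj}.
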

\begin{proof} Observe that by \citet{gr:87} and \citet{KE:06},
$$
    {}_{0}F_{1}^{\beta}(\beta n/2, \mathbf{XX}^{*}/4) = \sum_{k=0}^{\infty} \sum_{\kappa}
    \frac{1}{[\beta n/2]_{\kappa}^{\beta}}
    \frac{C_{\kappa}^{\beta}(\mathbf{XX}^{*}/4)}{k!},
$$
and by \citet{dg:09},
$$
  {}_{0}F_{1}^{\beta}( \beta n/2, \mathbf{XX}^{*}/4) = \displaystyle \int_{\mathbf{H}_{1} \in
        \mathcal{V}_{m,n}^{\beta}}  \etr \{\mathbf{XH}_{1}\}(d\mathbf{H}_{1}),
$$
whose equality was found by \citet{j:64}, for the complex case, and by \citet{lx:09},
for the quaternion case. Then, the desired result is obtained following the proof of
Lemma \ref{lemj}. \qed
\end{proof}

\section*{Acknowledgements}
This work was partially supported by IDI-Spain, Grants No. FQM2006-2271 and
MTM2008-05785. This paper was written during J. A. D\'{\i}az-Garc\'{\i}a's stay as a
visiting professor at the Department of Statistics and O. R. of the University of
Granada, Spain.

\bibliographystyle{plain}

\end{document}